\begin{document}

\newtheorem{theorem}{Theorem}
\newtheorem{lemma}[theorem]{Lemma}
\newtheorem{corollary}[theorem]{Corollary}
\newtheorem{proposition}[theorem]{Proposition}

\theoremstyle{definition}
\newtheorem*{definition}{Definition}
\newtheorem{remark}[theorem]{Remark}
\newtheorem*{example}{Example}

\numberwithin{table}{section}
\numberwithin{equation}{section}
\numberwithin{figure}{section}
\numberwithin{theorem}{section}

% CALLIGRAPHIC ALPHABET

\def\cA{\mathcal A}
\def\cB{\mathcal B}
\def\cC{\mathcal C}
\def\cD{\mathcal D}
\def\cE{\mathcal E}
\def\cF{\mathcal F}
\def\cG{\mathcal G}
\def\cH{\mathcal H}
\def\cI{\mathcal I}
\def\cJ{\mathcal J}
\def\cK{\mathcal K}
\def\cL{\mathcal L}
\def\cM{\mathcal M}
\def\cN{\mathcal N}
\def\cO{\mathcal O}
\def\cP{\mathcal P}
\def\cQ{\mathcal Q}
\def\cR{\mathcal R}
\def\cS{\mathcal S}
\def\cU{\mathcal U}
\def\cT{\mathcal T}
\def\cV{\mathcal V}
\def\cW{\mathcal W}
\def\cX{\mathcal X}
\def\cY{\mathcal Y}
\def\cZ{\mathcal Z}

% SCRIPT ALPHABET

\def\sA{\mathscr A}
\def\sB{\mathscr B}
\def\sC{\mathscr C}
\def\sD{\mathscr D}
\def\sE{\mathscr E}
\def\sF{\mathscr F}
\def\sG{\mathscr G}
\def\sH{\mathscr H}
\def\sI{\mathscr I}
\def\sJ{\mathscr J}
\def\sK{\mathscr K}
\def\sL{\mathscr L}
\def\sM{\mathscr M}
\def\sN{\mathscr N}
\def\sO{\mathscr O}
\def\sP{\mathscr P}
\def\sQ{\mathscr Q}
\def\sR{\mathscr R}
\def\sS{\mathscr S}
\def\sU{\mathscr U}
\def\sT{\mathscr T}
\def\sV{\mathscr V}
\def\sW{\mathscr W}
\def\sX{\mathscr X}
\def\sY{\mathscr Y}
\def\sZ{\mathscr Z}

% FRAKTUR ALPHABET

\def\fA{\mathfrak A}
\def\fB{\mathfrak B}
\def\fC{\mathfrak C}
\def\fD{\mathfrak D}
\def\fE{\mathfrak E}
\def\fF{\mathfrak F}
\def\fG{\mathfrak G}
\def\fH{\mathfrak H}
\def\fI{\mathfrak I}
\def\fJ{\mathfrak J}
\def\fK{\mathfrak K}
\def\fL{\mathfrak L}
\def\fM{\mathfrak M}
\def\fN{\mathfrak N}
\def\fO{\mathfrak O}
\def\fP{\mathfrak P}
\def\fQ{\mathfrak Q}
\def\fR{\mathfrak R}
\def\fS{\mathfrak S}
\def\fU{\mathfrak U}
\def\fT{\mathfrak T}
\def\fV{\mathfrak V}
\def\fW{\mathfrak W}
\def\fX{\mathfrak X}
\def\fY{\mathfrak Y}
\def\fZ{\mathfrak Z}

% BLACKBOARD BOLD

\def\C{{\mathbb C}}
\def\F{{\mathbb F}}
\def\K{{\mathbb K}}
\def\L{{\mathbb L}}
\def\N{{\mathbb N}}
\def\P{{\mathbb P}}
\def\Pc{\P^{\hskip1pt c}}
\def\Q{{\mathbb Q}}
\def\R{{\mathbb R}}
\def\Z{{\mathbb Z}}

% SOME STANDARD DEFINITIONS

\def\eps{\varepsilon}
\def\mand{\qquad\mbox{and}\qquad}
\def\mor{\qquad\mbox{or}\qquad}
\def\\{\cr}
\def\({\left(}
\def\){\right)}
\def\[{\left[}
\def\]{\right]}
\def\<{\langle}
\def\>{\rangle}
\def\fl#1{\left\lfloor#1\right\rfloor}
\def\rf#1{\left\lceil#1\right\rceil}
\def\le{\leqslant}
\def\ge{\geqslant}
\def\ds{\displaystyle}
\def\lealmost{\preccurlyeq}
\def\gealmost{\succcurlyeq}

\def\xxx{\vskip5pt\hrule\vskip5pt}
\def\yyy{\vskip5pt\hrule\vskip2pt\hrule\vskip5pt}
\def\imhere{ \xxx\centerline{\sc I'm here}\xxx }
\def\gettohere{ \xxx\centerline{\sc gotta get to here...}\xxx }

\newcommand{\commB}[1]{\marginpar{%
\begin{color}{red}
\vskip-\baselineskip %raise the marginpar a bit
\raggedright\footnotesize
\itshape\hrule \smallskip B: #1\par\smallskip\hrule\end{color}}}

\newcommand{\commI}[1]{\marginpar{%
\begin{color}{blue}
\vskip-\baselineskip %raise the marginpar a bit
\raggedright\footnotesize
\itshape\hrule \smallskip I: #1\par\smallskip\hrule\end{color}}}

\newcommand{\commG}[1]{\marginpar{%
\begin{color}{cyan}
\vskip-\baselineskip %raise the marginpar a bit
\raggedright\footnotesize
\itshape\hrule \smallskip G: #1\par\smallskip\hrule\end{color}}}

% SPECIAL DEFINITIONS FOR THIS PAPER

\def\e{\mathbf{e}}
\def\sPrc{{\displaystyle \sP_r^{(c)}}}

%%%%%%%%%%%%%%%%%%%%%%%%%%%%%%%%%%%%%%%%%
%%%%%%%%%%  PAPER STARTS HERE  %%%%%%%%%%
%%%%%%%%%%%%%%%%%%%%%%%%%%%%%%%%%%%%%%%%%

\title[Arithmetic properties of numbers of the form $\fl{p^c}$]
{\bf Some arithmetic properties of\\ numbers of the form $\fl{p^c}$}

\author[W.~D.~Banks]{William D.~Banks}  
\address{Department of Mathematics, University of Missouri, 
Columbia, MO 65211 USA}
\email{bankswd@missouri.edu} 

\author[V.~Z.~Guo]{Victor Z.~Guo} 
\address{Department of Mathematics, University of Missouri, 
Columbia, MO 65211 USA}
\email{zgbmf@mail.missouri.edu}

\author[I.~E.~Shparlinski]{Igor E.~Shparlinski}  
\address{Department of Pure Mathematics, University of New South Wales, 
Sydney, NSW 2052, Australia}
\email{igor.shparlinski@unsw.edu.au} 

\date{}
\pagenumbering{arabic}

\begin{abstract}
Let
$$
\Pc=(\fl{p^c})_{p\in\P}\qquad (c>1,~c\not\in\N),
$$
where $\P$ is the set of prime numbers, and $\fl{\cdot}$ is the floor function.
We show that for every such $c$ there are infinitely many members of $\Pc$ having
at most $R(c)$ prime factors, giving explicit estimates for $R(c)$ when $c$ is near
one and also when $c$ is large.
\end{abstract}

\maketitle

\section{Introduction}

\subsection{Motivation}
\emph{Piatetski-Shapiro sequences} are those sequences of the form
$$
\N^c=(\fl{n^c})_{n\in\N}\qquad (c>1,~c\not\in\N),
$$
where $\fl{t}$ denotes the integer part of any real number $t$.
Such sequences are named in honor of Piatetski-Shapiro, who showed
(cf.~\cite{PS})
that for any fixed $c\in(1,\tfrac{12}{11})$ there
are infinitely many primes in $\N^c$.
The admissible range of $c$ for this result has been
extended many times over the years, and currently it is
known to hold for all $c\in(1,\tfrac{243}{205})$ thanks
to Rivat and Wu~\cite{RivatWu}.

Many authors have studied arithmetic properties of Piatetski-Shapiro sequences
(see Baker \emph{et al}~\cite{BakerPSseqs} and the references contained
therein), and it is natural to ask whether certain properties also hold on
special subsequences of the Piatetski-Shapiro sequences.  Perhaps the most
important of these are the subsequences of the form
$$
\Pc=(\fl{p^c})_{p\in\P}\qquad (c>1,~c\not\in\N),
$$
where $\P=\{2,3,5,\ldots\}$ is the set of prime numbers; however,
up to now very little has been established about the arithmetic
structure of $\Pc$ for fixed~$c>1$.  Balog~\cite{Balog} has shown that
for \emph{almost all} $c>1$, the counting function
$$
\Pi_c(x)=\bigl|\bigl\{\text{prime~}p\le x:\fl{p^c}\text{~is prime}\bigr\}\bigr|
$$
satisfies
$$
\limsup_{x\to\infty}\frac{\Pi_c(x)}{x/(c\log^2x)}\ge 1,
$$
but this result gives no information for any specific choice of $c$.

Thanks to the work of Cao and Zhai~\cite{CaoZhai} it is known that the set $\Pc$
contains infinitely many \emph{squarefree} natural numbers provided that $c$ is not
too large.  More precisely, as a special case of the main result in \cite{CaoZhai},
one knows that for any $c\in(1,\frac{149}{87})$ there exists $\eps>0$
(depending only on $c$) such that the estimate
$$
\bigl|\bigl\{\text{\rm prime~}p\le x
:\fl{p^c}\text{\rm ~is squarefree}\bigr\}\bigr|
=\frac{6}{\pi^2}\cdot\pi(x)+O(x^{1-\eps})
$$
holds, where $\pi(x)$ denotes the number of primes not exceeding $x$.

In the present paper, as a step towards better understanding the arithmetic
properties of $\Pc$, we consider the related question of whether or not $\Pc$ contains
infinitely many \emph{almost primes}.

\subsection{Main results}

For every $R\ge 1$, we say that a natural number is an
\emph{$R$-almost prime} if it has at most $R$ prime factors,
counted with multiplicity.

We study almost prime values of $\fl{p^c}$
in two different regimes in order to demonstrate the underlying ideas:
(i) values of $c$ close to one, and
(ii) large values of $c$.

In the first regime, our result is stated in terms of 
the following set of admissible pairs $(R, c_R)$, $R =8, \ldots, 19$.
\begin{table}[H]
\centering
\begin{tabular}{|c|c||c|c||c|c|}
\hline
\vphantom{\Big|}$R$  &  $c_R$ & $R$  &  $c_R$ & $R$  &  $c_R$ \\
\hline
$8$ & $1.0521$ & $12$ & $1.1649$ & $16$ & $1.2073$ \\
$9$ & $1.1056$ & $13$ & $1.1780$ & $17$ & $1.2148$ \\
$10$ & $1.1308$ & $14$ & $1.1891$ & $18$ & $1.2214$ \\
$11$ & $1.1494$ & $15$ & $1.1988$ & $19$ & $1.2273$ \\
\hline
\end{tabular}
\caption{Admissible pairs $(R, c_R)$}
\label{tab:c_R}
\end{table}

\begin{theorem}
\label{thm:main2}
Let $(R,c_R)$,  $R =8, \ldots, 19$, be a pair from Table~\ref{tab:c_R}. 
Then for any fixed $c\in(1,c_R]$ there is a real number $\eta>0$ such that the lower bound 
$$
\bigl|\bigl\{\text{\rm prime~}p\le x
:\fl{p^c}\text{\rm ~is an $R$-almost prime}\bigr\}\bigr|
\ge \eta\,\frac{x}{\log^2x}
$$
holds for all sufficiently large $x$.
\end{theorem}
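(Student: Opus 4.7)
The plan is a weighted-sieve approach. I view
$$
\cA=\cA(x)=\{\fl{p^c}:p\in\P\cap[2,x]\}
$$
as a sequence of integers of size about $x^c$ to be sieved. The core arithmetic input I need is a level-of-distribution statement of the form
$$
|\cA_d|=\frac{|\cA|}{d}+R_d,\qquad \sum_{d\le D}|R_d|\ll x/\log^A x
$$
for every fixed $A>0$, where $|\cA_d|=|\{a\in\cA:d\mid a\}|$ and $D=x^{\theta}$ with $\theta=\theta(c)$ as large as one can prove. The divisibility condition $d\mid\fl{p^c}$ is equivalent to $\{p^c/d\}<1/d$, so the main task becomes estimating a sum over primes of the indicator of a short fractional-part interval. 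Smoothing this indicator by a Beurling--Selberg or Vaaler polynomial of order $H$ converts $R_d$ into a sum of exponential sums
$$
S(h,d)=\sum_{p\le x}e(hp^c/d),\qquad 1\le h\le H,
$$
which must be bounded uniformly in $h$ and $d$.

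To estimate $S(h,d)$ I would peel off the von Mangoldt weight via Vaughan's identity (or Heath-Brown's identity), reducing the sum to Type~I and Type~II bilinear forms
$$
\sum_{m\sim M}\sum_{n\sim N}a_m b_n\,e\bigl(h(mn)^c/d\bigr),
$$
which are then bounded by van der Corput's $k$-th derivative test with a carefully chosen exponent pair $(\kappa,\lambda)$. Optimizing over the Type~I/II split and the exponent pair yields the admissible level $\theta(c)$. With $\theta(c)$ in hand, and since we are sifting a single residue class (the zero class) modulo each prime, the sieve is linear of dimension one, so I would apply Richert's weighted sieve as presented in Halberstam--Richert. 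This guarantees $\gg x/\log^2 x$ primes $p\le x$ for which $\fl{p^c}$ is an $R$-almost prime, provided that $R$ exceeds a threshold determined by $\theta(c)$ and the standard linear sieve functions $f,F$. Each entry $(R,c_R)$ in Table~\ref{tab:c_R} then arises from a direct numerical optimization: fix $R$, maximize $c$ subject to the Richert threshold being met.

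The main obstacle is squeezing the largest possible $\theta(c)$ out of the exponential sum analysis, since each incremental gain in $\theta$ can shift $R$ by one integer. The Type~II sums are the delicate part: the phase $h(mn)^c/d$ is smooth in neither variable, so cancellation must be exploited across the bilinear structure via a $B$-process (Poisson summation) followed by an $A$-process (Weyl differencing), and the $c$-dependence is most pronounced precisely here. The tail of the Vaaler smoothing and the uniformity in $d$ can be handled by a standard Erd\H{o}s--Tur\'an/dyadic argument and will not affect the leading admissible pairs in Table~\ref{tab:c_R}.
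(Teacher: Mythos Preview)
Your overall architecture matches the paper's: reduce to a level-of-distribution statement for $\cA=\{\fl{p^c}:p\le x\}$, convert the divisibility condition to a fractional-part condition, smooth, and bound the resulting exponential sums $\sum_p e(hp^c/d)$ via a combinatorial decomposition into bilinear sums. The differences are in the specific tools chosen at each stage, and some of them matter for the numbers in Table~\ref{tab:c_R}.

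First, for the sieve the paper invokes Greaves' weighted sieve (\cite[Chapter~5, Proposition~1]{Greaves}) rather than Richert's version from Halberstam--Richert; the Greaves constants $\delta_R$ are what produce the precise thresholds, though Richert would give qualitatively the same conclusion. Second, for the decomposition the paper does not use Vaughan or Heath--Brown but rather Rivat--Sargos~\cite[Lemma~2]{RivSar}, which splits into \emph{three} ranges: one Type~II region and \emph{two} Type~I regions, each requiring a different estimate. Third, and most significantly, the exponential-sum input is more specialized than ``van der Corput plus an exponent pair'': the Type~II sums are handled by an adaptation of Baker~\cite[Theorem~2]{Baker1} (essentially a bilinear large-sieve/spacing argument with the exponent pair $(\tfrac12,\tfrac12)$, stated as Lemma~\ref{lem:GenMultSum}); one Type~I region uses Wu~\cite[Theorem~2]{Wu} (a refined double large sieve from the primitive circle problem, applied with $k=5$); the other Type~I region uses the three-dimensional monomial estimate of Robert--Sargos~\cite[Theorem~3]{RobSar}. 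A plain $A$-/$B$-process with a single exponent pair, as you describe, would give a valid level of distribution but almost certainly a smaller $\theta(c)$ and hence larger $R$ than those tabulated; the specific values $c_R$ come from balancing eleven explicit inequalities (Lemma~\ref{lem:useful}) arising from these three estimates.
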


In the second regime, we prove the following result.

\begin{theorem}
\label{thm:main3}
For fixed $c\ge\frac{11}5$ there is a positive integer
$$R \le 
\begin{cases}  
16c^3+179c^2&\quad\hbox{if $c\in[\frac{11}5,3)$},\\
16c^3+88c^2&\quad\hbox{if $c\ge 3$},
\end{cases}
$$
and a real number $\eta>0$
such that the lower bound
$$
\bigl|\bigl\{\text{\rm prime~}p\le x
:\fl{p^c}\text{\rm ~is an $R$-almost prime}\bigr\}\bigr|
\ge \eta\,\frac{x}{\log^2x}
$$
holds for all sufficiently large $x$.
\end{theorem}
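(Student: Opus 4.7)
The plan is to apply the Rosser--Iwaniec linear sieve to the sequence
$$
\cA = \bigl(\fl{p^c}\bigr)_{p\le x,\,p\in\P},
$$
whose natural sifting density is $|\cA_d|/|\cA|\approx 1/d$. A level of distribution $\theta>0$ for $\cA$---control of $|\cA_d|-\pi(x)/d$ averaged over $d\le x^{\theta}$---together with the threshold $s=\log D/\log z>2$ for the one-dimensional Rosser--Iwaniec sieve, produces a positive proportion of primes $p\le x$ for which $\fl{p^c}$ has no prime factor below $z=x^{c/R}$, and hence at most $R$ prime factors, provided $R> 2c/\theta$. Consequently the problem reduces to extracting the largest admissible $\theta$, and the claimed bound $R\le 16c^{3}+O(c^{2})$ corresponds to establishing $\theta\gtrsim 1/(8c^{2})$ uniformly for $c\ge \tfrac{11}{5}$.

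To extract $\theta$, first I would detect the divisibility $d\mid\fl{p^c}$ via the equivalent condition $\{p^c/d\}\in[0,1/d)$ and a Fourier expansion,
$$
\mathbf{1}\bigl[d\mid\fl{p^c}\bigr] = \frac{1}{d} + \sum_{0<|h|\le H} c_h(d)\,e(hp^c/d) + R_H(p,d),
$$
with $|c_h(d)|\ll\min(1/d,1/|h|)$ and an averageable tail $R_H$. Summing over $p\le x$ splits $|\cA_d|-\pi(x)/d$ into a linear combination of exponential sums
$$
S(h,d)=\sum_{p\le x}e\!\bigl(hp^c/d\bigr),\qquad 1\le h\le H,\ d\le x^{\theta}.
$$
I would then use Vaughan's identity to decompose $S(h,d)$ into bounded numbers of Type~I and Type~II sums, estimate Type~I via the van der Corput $k$-th derivative bound, and Type~II via an $AB$-process followed by the same derivative test, in both cases with $k$ of order $c$. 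This choice is what converts the derivative growth $|f^{(k)}(t)|\asymp(h/d)c^{k}t^{c-k}$ of the phase $f(t)=ht^{c}/d$ into a power saving, and it is where the $c^{3}$ scaling enters.

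The two ranges $c\in[\tfrac{11}{5},3)$ and $c\ge 3$ are then treated in the same framework with different choices of $k$ and of the Vaughan cutoffs; $c=3$ marks the point at which one additional derivative becomes profitable, lowering the coefficient of $c^{2}$ from $179$ to $88$. Once $|S(h,d)|\ll x^{1-\delta}$ holds uniformly for $h\le H$ and $d\le x^{\theta(c)}$, the resulting level of distribution feeds into the Rosser--Iwaniec sieve with $z=x^{c/R}$ and $R=\lceil 2c/\theta(c)\rceil+1$, completing the proof.

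The main obstacle will be the optimization in the Vaughan/van der Corput step: one must simultaneously balance the losses from Type~I versus Type~II, the derivative order $k$, the Fourier truncation $H$, and the Fourier coefficient decay $1/|h|$, in order to extract the sharpest admissible $\theta(c)$ in each of the two ranges of $c$. The sieve input and the Fourier detection of the divisibility condition are essentially standard; virtually all of the technical bookkeeping that yields the explicit constants $16$, $179$, and $88$ is absorbed into this optimization step.
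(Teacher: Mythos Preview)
Your outline has the right architecture---Fourier-detect the divisibility, reduce to exponential sums over primes, decompose into Type~I/II, feed a level of distribution into a sieve---but the exponential-sum input you propose is too weak to reach $R\ll c^3$, and the sieve you propose loses a further factor of~$2$ in the leading constant.

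The decisive gap is your appeal to ``the van der Corput $k$-th derivative bound'' with $k$ of order $c$. The classical Weyl--van der Corput $k$-th derivative test saves only a factor of roughly $N^{1/(2^k-2)}$; with $k\asymp c$ this is an \emph{exponentially} small saving $\asymp 2^{-c}$, which forces the level of distribution $\theta$ down to size $2^{-c}$ and hence $R\gg c\cdot 2^{c}$, not $c^3$. The $c^3$ scaling in the theorem comes instead from Vinogradov's method: one bounds $\sum_{z\sim N^\Theta}\e(z^c N^\Delta)$ via Vinogradov's lemma together with the sharp mean-value estimate of Wooley (or Bourgain--Demeter--Guth), obtaining a saving exponent $\varrho\asymp 1/k^2$ rather than $1/2^k$. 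With $k\asymp c$ in the Type~I range this yields $\theta\asymp 1/(16c^2)$, and that is what drives the constant~$16$ in $16c^3$. Nothing in the $AB$-process/van der Corput framework produces this polynomial saving.

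Secondly, even after you replace van der Corput by Vinogradov, the Rosser--Iwaniec lower bound with the threshold $s>2$ that you invoke gives only $R>2c/\theta\approx 32c^3$. The paper obtains $R\approx c/\theta\approx 16c^3$ by using Greaves' weighted sieve, which requires only that the degree $\rho=c/\theta$ satisfy $\rho<R-\delta_R$ with $\delta_R\approx 0.125$. So to hit the stated leading constant you must either double the level of distribution (which the exponential-sum analysis does not permit) or switch to a weighted sieve. Your claimed target $\theta\gtrsim 1/(8c^2)$ is in fact not attainable by the method; the Type~I sums with $Y\gg N^{1/2}$ pin $\theta$ at $1/(16c^2)+O(c^{-3})$.
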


These results are based on bounds of bilinear exponential sums 
and estimates on the uniformity of distribution 
of fractional parts $\{p^cd^{-1}\}$.
We use the notion of \emph{level of distribution} from
sieve theory in a precise form stated in~\S\ref{sec:level distr};
see Friedlander and Iwaniec~\cite{FrIw} and Greaves~\cite{Greaves}
We remark that although the ranges of Theorems~\ref{thm:main2}
and~\ref{thm:main3} do not overlap, using the same methods and
sacrificing on the explicitness of the bounds for $R$,
one can cover the gap as well.

\subsection{Notation}
\label{sec:note}

Throughout the paper, we use the symbols $O$, $\ll$, $\gg$ and $\asymp$
along with their standard meanings; any constants or functions
implied by these symbols may depend
on $c$ and (where obvious) on the parameters $\eps$
and $\nu$ but are absolute otherwise.
We use the notation $m\sim M$ as an
abbreviation for $M<m\le 2M$.

The letter $p$ always denotes a prime number.
As usual, $\mu(\cdot)$ is the
M\"obius function, and $\Lambda(\cdot)$ is
the von Mangoldt function.

We write $\e(t)=\exp(2\pi it)$ for all $t\in\R$.

\section{Proof of Theorem~\ref{thm:main2}}

\subsection{Preliminaries}
\label{sec:level distr} 

As we have mentioned the following notion plays a
crucial r\^ole in our arguments. We specify it to the form 
that is suited to our applications; it is based 
on a result of Greaves~\cite{Greaves}
that relates level of distribution to $R$-almost primality.
More precisely, we say that an $N$-element set of integers $\cA$
has a \emph{level of distribution} $D$ if for a given multiplicative
function $f(d)$ we have 
$$
\sum_{d \le D} \max_{\gcd(s,d)=1} \left|\big|\{a\in \cA,~
a \equiv s \bmod d\}\big|
- \frac{f(d)}{d}N\right| \le  \frac{N}{\log^2N}.
$$
As in~\cite[pp.~174--175]{Greaves} we define the constants
$$
\delta_2 = 0.044560, \qquad \delta_3 = 0.074267, 
\qquad \delta_4 = 0.103974
$$
and 
$$
\delta_R = 0.124820, \qquad R \ge 5.
$$
We have the following result, which is~\cite[Chapter~5, Proposition~1]{Greaves}.

\begin{lemma}
\label{lem:Greaves}
Suppose $\cA$ is an $N$-element set of positive integers 
with a level of distribution $D$ and degree $\rho$ in the sense
that
$$
a<D^\rho \qquad(a\in \cA)
$$
holds with some real number $\rho<R-\delta_R$. Then 
$$
\big|\{a \in \cA:\text{\rm $a$~is an $R$-almost prime}\}\big|
\gg_\rho  \frac{N}{\log^2N}\,.
$$
\end{lemma}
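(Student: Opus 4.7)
The statement is essentially Greaves' weighted linear sieve theorem, so my plan is to outline the strategy underlying it. The idea is to construct non-negative weights $W(a)$ supported on $\cA$ so that $W(a)>0$ forces $a$ to be an $R$-almost prime, and then to prove the lower bound $\sum_{a\in\cA}W(a)\gg N/\log^2 N$.

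First, I would fix a sifting threshold $z=D^{1/u}$ with $u>1$ to be optimized later, and introduce the sifting function
$$
S(\cA,z)=\bigl|\{a\in\cA:\gcd(a,P(z))=1\}\bigr|,\qquad P(z)=\prod_{p<z}p.
$$
Feeding the level-of-distribution hypothesis into the Rosser--Iwaniec linear sieve for the multiplicative density $f$ yields a positive lower bound $S(\cA,z)\gg V(z)N$ in the range $u<2$, where $V(z)=\prod_{p<z}(1-f(p)/p)\asymp 1/\log N$ by Mertens' theorem applied to $f$. The sieve error term is absorbed by the $N/\log^2N$ bound built into the definition of level of distribution.

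Next, I would attach to each surviving element Greaves' weights
$$
W(a)=1-\sum_{\substack{p\mid a\\ z\le p<y}}\lambda(p),
$$
for an auxiliary parameter $y$ and weight function $\lambda$. Because $a<D^\rho$, any $a$ that survives the $z$-sifting and has more than $R$ prime factors must contain several prime factors slightly above $z$; one chooses $\lambda$ so that $W(a)\le 0$ precisely on such $a$. The contribution of the subtracted terms is once again controlled by the level of distribution applied to the subsets $\{a\in\cA:p\mid a\}$.

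The main obstacle, and the heart of Greaves' argument, is the extremal calibration of $\lambda$, $u$ and $y$ that converts the positivity of $\sum_{a\in\cA}W(a)$ into the precise numerical threshold $\rho<R-\delta_R$ with the tabulated constants $\delta_R$. This delicate variational problem is carried out in~\cite[Chapter~5]{Greaves}; once it is solved, combining the sieve lower bound with the controlled weight subtraction produces a positive main term of order $NV(z)\gg N/\log^2N$, which yields the lemma.
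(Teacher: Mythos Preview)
The paper does not prove this lemma at all: it is simply quoted as \cite[Chapter~5, Proposition~1]{Greaves}. Your outline therefore goes well beyond what the paper itself offers, and it correctly identifies the mechanism --- a weighted linear sieve with Greaves-type weights, calibrated so that positivity of $\sum_{a}W(a)$ forces $R$-almost primality under the degree constraint $\rho<R-\delta_R$.

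One small slip: the Rosser--Iwaniec lower bound $S(\cA,z)\gg V(z)N$ is nontrivial only for $u>2$ (the lower-bound function in the linear sieve vanishes on $[1,2]$), not for $u<2$ as you wrote. More substantively, your sketch slightly conflates two stages: one does not first secure a positive $S(\cA,z)$ and then attach weights; rather, one evaluates the weighted sum $\sum_{a\in\cA}W(a)\,\mathbf{1}_{(a,P(z))=1}$ directly, feeding both the upper- and lower-bound forms of the linear sieve into its various pieces. These are refinements of exposition rather than genuine gaps --- the architecture you describe is indeed Greaves', and the paper relies on his monograph for all of it.
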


Note that we always have $R\ge 5$ in what follows.

Using Baker and Pollack~\cite[Lemma~1]{BaPol} together with Lemma~\ref{lem:Greaves},
it is easily seen that the proof of Theorem~\ref{thm:main2} reduces to
showing that, for a fixed pair $(R,c_R)$ as in Table~\ref{tab:c_R}, 
for any fixed numbers $c\in(1,c_R]$ and $\vartheta\in(0,1/R)$
the uniform bound
\begin{equation}
\label{eq:triple sum}
\sum_{1\le h\le H}\sum_{d \sim D}  \left| \sum_{n\sim x}
\Lambda(n) \e(h d^{-1} n^c) \right|
\ll_{\vartheta} \frac{Dx}{\log^3x}
\end{equation}
holds with any $D\le x^\vartheta$ and $H=D\log^3x$.
To estimate the triple sums in~\eqref{eq:triple sum} we
treat the summation over $h$ with straightforward estimates
after estimating the inner sums over $d$ and $n$.
Choosing a sufficiently small $\kappa>0$ and applying
Rivat and Sargos~\cite[Lemma~2]{RivSar} with
$$
\alpha = \max\{1/20, \vartheta + \kappa\} < 1/6,
$$  
it suffices to show that 
\begin{equation}
\label{eq:multilinear_sum_hyperb}
\sum_{d\sim D}c_d\sum_{m\sim M}a_m\sum_{\ell\sim x/m}b_\ell\,\e(h d^{-1} \ell^c m^c) 
\ll_{\vartheta, \kappa, \xi}  x^{1-\xi}
\end{equation}
with some fixed $\xi > 0$ (depending on $\vartheta$),
arbitrary weights $c_d, a_m, b_\ell$ of size $O(1)$,
and in three ranges of  $M$
that correspond to two Type~I sums and one Type~II sum. 
More precisely, denoting
$$
u_0 = x^\alpha
$$
these ranges are the following:
\begin{itemize}
\item[(i)] Type~II sums: $u_0 \ll x/M \ll u_0^2$;
\item[(ii)]  Type~I sums: $u_0^2  \ll x/M \ll x^{1/3}$
with $b_\ell$ being the characteristic function of an interval;
\item[(iii)]  Type~I sums: $M \ll x^{1/2} u_0^{1/2}$
with $b_\ell$ being the characteristic function of an interval. 
\end{itemize}
By a standard application of the Fourier analysis (see, e.g., Garaev~\cite{Gar} or
Banks \emph{et al}~\cite{BFGS})
the hyperbolic region of summation in~\eqref{eq:multilinear_sum_hyperb} can be
replaced with a rectangular region; in other words,
it is enough to derive the bound 
\begin{equation}
\label{eq:multilinear_sum}
\sum_{d\sim D}c_d\sum_{m\sim M}a_m\sum_{\ell\sim L}b_\ell\,\e(h d^{-1} \ell^c m^c) 
\ll_{\vartheta, \kappa, \xi}  x^{1-\xi}
\end{equation}
for some $L$ and $M$ with $LM \asymp x$ in the following  three ranges:
\begin{itemize}
\item[(i)] Multilinear  Type~II sums: $u_0 \ll L \ll u_0^2$; 
\item[(ii)]  Multilinear   Type~I sums: $u_0^2  \ll L \ll x^{1/3}$
with $b_\ell$ being the characteristic function of an interval;
\item[(iii)]   Multilinear   Type~I sums: $M \ll x^{1/2} u_0^{1/2}$
with $b_\ell$ being the characteristic function of an interval.
\end{itemize}

Before proceeding, we record the following technical result
which simplifies the exposition below.

\begin{lemma}
\label{lem:useful}
Fix an admissible pair $(R,c_R)$ from Table~\ref{tab:c_R}.
For any fixed numbers $c\in(1,c_R]$ and $\vartheta\in(0,1/R)$,
there is a positive number $\kappa$ such that if we define
$$
\alpha = \max\{1/20, \vartheta + \kappa\},
$$
then all of the following inequalities hold:
\begin{itemize}
\item[(i)] $2\vartheta+2\alpha<c$;
\item[(ii)] $c+5\vartheta+2\alpha<2$;
\item[(iii)] $365/3+32c+147\vartheta<174$;
\item[(iv)] $8/3+c+2\vartheta<4$;
\item[(v)] $2+c+4\vartheta<4$;
\item[(vi)] $1+\vartheta-2\alpha<1$;
\item[(vii)] $1+\vartheta/2-\alpha<1$;
\item[(viii)] $2/3+\vartheta<1$;
\item[(ix)] $1-c/2+3\vartheta/2<1$;
\item[(x)] $2\vartheta+(1+\alpha)/2<c$;
\item[(xi)] $2c+6\vartheta+\alpha<3$.
\end{itemize}
\end{lemma}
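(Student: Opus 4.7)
The plan is to treat Lemma~\ref{lem:useful} as a finitary verification exercise. All eleven inequalities are strict and continuously depend on the triple $(c,\vartheta,\alpha)$, and $\kappa>0$ may be chosen arbitrarily small, so it suffices to check each inequality strictly at the ``worst corner'' of the allowed parameter box and then appeal to continuity.

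First I would exploit the fact that $1/R\in[1/19,1/8]$, so $1/R>1/20$; then choosing $\kappa<1/R-\vartheta$ forces $\alpha=\max\{1/20,\vartheta+\kappa\}<1/R$, and the definition $\alpha\ge\vartheta+\kappa$ gives the bonus $\alpha>\vartheta$. Under $\vartheta,\alpha<1/R\le 1/8$ and $c>1$, inequalities (vi)--(x) are automatic: (vi) and (vii) both reduce to $\vartheta<2\alpha$, which follows from $\alpha>\vartheta$; (viii) reduces to $\vartheta<1/3$, trivial; and (i), (ix), (x) each have the form ``small linear expression in $(\vartheta,\alpha)$'' $<c$, holding with enormous slack because the left-hand sides are bounded above by $4/R$, $3\vartheta<3/8$, and $1/2+5/(2R)<1$ respectively. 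This leaves only the five constraints (ii), (iii), (iv), (v), (xi), whose left-hand sides are nondecreasing in each of $c,\vartheta,\alpha$ and which therefore must be checked at the boundary $c=c_R$, $\vartheta=\alpha=1/R$.

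Second I would rearrange those five into explicit upper bounds on $c_R$:
\[
c_R<2-\tfrac{7}{R},\qquad c_R<\tfrac{157/3-147/R}{32},\qquad c_R<\tfrac{4}{3}-\tfrac{2}{R},\qquad c_R<2-\tfrac{4}{R},\qquad c_R<\tfrac{3}{2}-\tfrac{7}{2R},
\]
coming from (ii), (iii), (iv), (v), (xi) respectively. A glance shows that (ii) and (v) are dominated by (xi) and (iv) throughout, so only three explicit ceilings matter. For $R=8$ these evaluate to approximately $1.0612$, $1.0833$, $1.0625$, all strictly above $c_8=1.0521$ (with (iii) the tightest); at the other end, $R=19$ gives approximately $1.3937$, $1.2281$, $1.3157$, with (iv) barely clearing $c_{19}=1.2273$; a row-by-row check handles $R=9,\dots,18$ and shows the crossover from (iii)-binding to (iv)-binding near $R=10$. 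Having verified every strict inequality at the corner, one picks $\kappa$ to be a small fraction of the minimum slack to finish.

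The main obstacle is bookkeeping rather than mathematics: the table values $c_R$ appear to have been engineered so that the binding constraint switches between (iii), (iv), and (xi), with margins on the order of $10^{-3}$, so each of the twelve rows must be checked carefully against all three explicit upper bounds. Beyond that arithmetic, no subtlety is involved.
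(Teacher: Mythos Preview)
Your proposal is correct and carries out precisely the finite verification that the paper omits: the paper provides no proof of this lemma, merely remarking that ``the proof of Lemma~\ref{lem:useful} is straightforward'' and noting the redundancy of (vi) and (vii). Your reduction to the corner $c=c_R$, $\vartheta=\alpha=1/R$ via monotonicity, together with the observation that (ii) and (v) are implied by (xi) and (iv), leaves exactly the three row-by-row checks you describe, and these do go through with the stated margins.
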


\begin{remark}
These inequalities are listed for convenience only and in some cases are
redundant (for instance, (vi) and (vii) are equivalent).
The proof of Lemma~\ref{lem:useful} is straightforward.
\end{remark}

\subsection{General multilinear  sums} 

First, we need an adaptation of a result of Baker~\cite[Theorem~2]{Baker1}, 
which is given here only for the specific exponent pair
$(\kappa, \lambda) = (\frac12,\frac12)$. 
Note that we use $D$ and $L$
instead of $M_1$ and $M_2$, respectively in the notation of~\cite[Theorem~2]{Baker1},
and thus we use $d$ and $\ell$ instead of $m_1$ and $m_2$.
However, $M$ and $m$ retain the same meaning.   

\begin{lemma}
\label{lem:GenMultSum} 
Let  $\alpha_1,\alpha_2,\beta$ be nonzero real numbers such that $\beta<1$, let $h,D,L,M$
be positive integers, and let $g$ be a real function on the interval $[M,2M]$
such that
$$
g'(x)\asymp h M^{\beta-j}\qquad (x\sim M).
$$
Let
$$
S=\sum_{m \sim M}\sum_{d \sim D} \sum_{\ell \sim L} 
a_m  c_{d, \ell}\,
\e\(g(m) d^{\alpha_1}  \ell^{\alpha_2}\) 
$$
where $a_m,c_{d,\ell}$ are complex numbers with
$a_m,c_{d,\ell}\ll 1$. If the number
$X = h D^{\alpha_1} L^{\alpha_2} M^\beta$ is such that $X\ge DL$,
then
$$
S \ll DLM\((DL)^{-1/2} + (X/(DLM^2))^{1/6}\)\log 2DL.
$$
\end{lemma}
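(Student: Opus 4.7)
The plan is to follow the standard Cauchy--Schwarz plus exponent-pair paradigm for bounding multilinear exponential sums. First I would isolate the $a_m$ weights by writing
$$
S = \sum_{m\sim M} a_m\,T(m), \qquad T(m) := \sum_{d\sim D}\sum_{\ell\sim L}c_{d,\ell}\,\e\(g(m) d^{\alpha_1}\ell^{\alpha_2}\),
$$
and applying Cauchy--Schwarz in the $m$-variable to obtain $|S|^2 \le M\sum_{m\sim M}|T(m)|^2$. Expanding $|T(m)|^2$ and interchanging the order of summation yields
$$
\sum_{m\sim M}|T(m)|^2 = \sum_{\substack{d_1,d_2\sim D\\ \ell_1,\ell_2\sim L}}c_{d_1,\ell_1}\overline{c_{d_2,\ell_2}}\sum_{m\sim M}\e\(g(m)\Delta\),
$$
where $\Delta := d_1^{\alpha_1}\ell_1^{\alpha_2} - d_2^{\alpha_1}\ell_2^{\alpha_2}$. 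The diagonal contribution from quadruples with $(d_1,\ell_1) = (d_2,\ell_2)$ is trivially bounded by $DLM$, and after taking a square root this produces the first term $DLM\cdot(DL)^{-1/2}$ in the claimed estimate.

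For the off-diagonal terms (those with $\Delta\ne 0$), the derivative condition on $g$ implies that the phase $g(m)\Delta$ has $j$-th derivative of size $\asymp |\Delta|hM^{\beta-j}$ on $[M,2M]$, so the exponent pair $(\kappa,\lambda) = (\tfrac12,\tfrac12)$ (equivalently, van der Corput's second-derivative test) applied to the inner sum over $m$ yields
$$
\Bigl|\sum_{m\sim M}\e\(g(m)\Delta\)\Bigr| \ll \(|\Delta|hM^\beta\)^{1/2}.
$$
Inserting this bound reduces the off-diagonal contribution to a quadruple sum of the form $(hM^\beta)^{1/2}\sum|\Delta|^{1/2}$, which I would estimate by a further Cauchy--Schwarz step combined with a dyadic decomposition on the size of $|\Delta|$: one counts quadruples $(d_1,d_2,\ell_1,\ell_2)$ whose value of $|\Delta|$ lies in a given dyadic interval, sums over the dyadic ranges (this is what produces the $\log 2DL$), and then combines with the outer factor $M^{1/2}$ from the initial Cauchy--Schwarz. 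Taking a final square root produces the second term $DLM(X/(DLM^2))^{1/6}\log 2DL$.

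The main technical obstacle is this last off-diagonal step: the Cauchy--Schwarz and dyadic counting must be arranged so that the exponents of $DL$, $M$ and $X$ balance precisely to $5/6$, $2/3$ and $1/6$. The hypothesis $X\ge DL$ enters here to guarantee that the second-derivative bound dominates the trivial bound $M$ at the relevant scales, so that the exponent pair $(\tfrac12,\tfrac12)$ is indeed the correct tool. Beyond this, the argument is direct: the weights $a_m$ are absorbed by the initial Cauchy--Schwarz and the weights $c_{d,\ell}$ by the trivial estimate $|c_{d_1,\ell_1}\overline{c_{d_2,\ell_2}}|\ll 1$, so no separate treatment via partial summation is needed.
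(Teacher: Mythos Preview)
Your opening moves are the same as the paper's: Cauchy--Schwarz in $m$ to strip off the weights $a_m$, expand the square, and split into diagonal and off-diagonal quadruples. The diagonal gives the term $DLM(DL)^{-1/2}$, exactly as you say. The gap is in the off-diagonal part.

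First, applying the exponent pair $(\tfrac12,\tfrac12)$ uniformly to every off-diagonal inner sum is not legitimate: when $|\Delta|$ is small but nonzero the phase $g(m)\Delta$ has small first derivative, and the van der Corput bound $(|\Delta|hM^\beta)^{1/2}$ is either inapplicable or dominated by the trivial bound $M$. The paper (following Baker) treats this regime separately with the Kusmin--Landau first-derivative estimate, obtaining $\sum_m\e(f(m))\ll X^{-1}\Delta^{-1}M$, and it is exactly here that the hypothesis $X\ge DL$ is invoked to show that this contribution folds back into the near-diagonal term. Your remark that $X\ge DL$ ``guarantees that the second-derivative bound dominates the trivial bound'' is not quite the mechanism.

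Second, and more seriously, the vague ``further Cauchy--Schwarz step combined with a dyadic decomposition on $|\Delta|$'' does not produce the exponent $1/6$. If you carry your plan through --- dyadic ranges for $|\Delta|$, the spacing count $\ll (DL)^2\Delta_0/\delta_{\max}+DL$ for quadruples, and the exponent-pair bound --- the dominant range is $\Delta_0\asymp\delta_{\max}$ and you end up with $S\ll DL\,M^{1/2}X^{1/4}$, which is strictly weaker than $(DL)^{5/6}M^{2/3}X^{1/6}$ in the ranges of interest. A second Cauchy--Schwarz only loses further. What the paper does instead is introduce a free truncation parameter $Q\le DL$ at the outset (this is the content of Baker's argument): quadruples with normalized difference below $Q^{-1}$ give $S^2\ll DLM^2Q\sL^2$, those above give $S^2\ll D^2L^2M\sL^2(X/Q)^{1/2}$, and the choice $Q\asymp (D^2L^2X/M^2)^{1/3}$ balances the two to produce the $1/6$. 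Without this free parameter (or an equivalent optimization) the target bound is out of reach.
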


\begin{proof}
As this is a straightforward variant of~\cite[Theorem~2]{Baker1} we
indicate mainly the changes that are needed in the proof.

Let
\begin{equation}
\label{eq:Q small}
 Q \le DL
\end{equation}
be a natural number to be determined later.
Following \cite{Baker1} we see that either
(cf.~\cite[Equation~(3.8)]{Baker1})
\begin{equation}
\label{eq:skate0}
S^2\ll DLM^2Q\sL^2
\end{equation}
holds with $\sL=\log 2DL$ (which corresponds to the value $h=0$ 
in \cite[Equation~(3.6)]{Baker1}),
or else we have (cf.~\cite[Equation~(3.9)]{Baker1})
\begin{equation}
\label{eq:skate1}
S^2\ll D^2L^2MQ\Delta\sL^2\bigg|\sum_{m\sim M}\e(f(m))\bigg|,
\end{equation}
where $f(x)=g(x)(d_1^{\alpha_1}\ell_1^{\alpha_2}-d_2^{\alpha_1}\ell_2^{\alpha_2})$
with some quadruple $(d_1,d_2,\ell_1,\ell_2)$ that satisfies
$$
d_1,d_2\sim D,\qquad
\ell_1,\ell_2\sim L,\qquad
\Delta-\frac{1}{DL}\le\biggl|\Bigl(\frac{d_1}{d_2}\Bigr)^{\alpha_1}
-\Bigl(\frac{\ell_2}{\ell_1}\Bigr)^{\alpha_2}\biggr|<2\Delta
$$
where $\Delta$ is a number of  the form 
$\Delta = 2^h(DL)^{-1}$ with 
some fixed integer $h\ge 1$, which satisfies the bound
\begin{equation}
\label{eq:Delta small}
\Delta\ll Q^{-1}
\end{equation}
(recall also the condition~\eqref{eq:Q small}).
Note that
$$
f'(m)\asymp X\Delta M^{-1}\qquad (x\sim M)
$$
as in~\cite{Baker1}.

Now, if the inequality  $X\Delta M^{-1}\le \varepsilon$ holds
with for some sufficiently small (but fixed) $\varepsilon >0$,
we can proceed as in Case~(i) in the 
proof of \cite[Theorem~2]{Baker1} (making use of~\cite[Lemma~4.19]{Titch}) 
to obtain the bound
$$
\sum_{m\sim M} \e(f(m)) \ll X^{-1}\Delta^{-1}M.
$$
Since $X\ge DL$, upon combining this with~\eqref{eq:skate1} we again
obtain~\eqref{eq:skate0}.

On the other hand, if the inequality $X\Delta M^{-1}>\varepsilon$ holds,
then we can proceed as in Case~(ii) in the proof of~\cite[Theorem~2]{Baker1}
(with $\kappa=\lambda=\tfrac12$) to derive that
$$
\sum_{m\sim M} \e(f(m)) \ll   ( X\Delta)^{1/2}.
$$
Combining this with~\eqref{eq:skate1} and~\eqref{eq:Delta small} we have
\begin{equation}
\label{eq:skate4}
S^2\ll D^2L^2M\sL^2(X/Q)^{1/2}.
\end{equation}

Putting~\eqref{eq:skate0} and~\eqref{eq:skate4} together, we deduce that
$$
S\ll DLM\sL\((Q/(DL))^{1/2}+(X/(M^2Q))^{1/4}\).
$$
The optimal choice for the natural number $Q$ is
$$
Q=\rf{(D^2L^2X/M^2)^{1/3}}.
$$
We note that if for  the above choice of $Q$ condition~\eqref{eq:Q small}
is not satisfied  then $X/M^2 \gg DL$ 
and the  result is trivial.  Now, simple calculations lead to the desired 
bound. 
\end{proof} 

\subsection{Multilinear  sums: Region$\,$(i)}
In this region, we can apply Lemma~\ref{lem:GenMultSum} 
to bound the sum in~\eqref{eq:multilinear_sum},
making the choices $\alpha_1=-1$, $\alpha_2=\beta=c$, 
$c_{d, \ell} = c_d b_\ell$ and $g(x) = hx^c$. Since
$LM\asymp x$ and $2\vartheta+2\alpha<c$ by Lemma~\ref{lem:useful}$\,$(i)
we see that
\begin{equation}
\label{eq:library1}
X = h D^{-1} L^c M^c\ge DL
\end{equation}
if $x$ is large, and recalling that $H=D\sL^3$ with $\sL=\log x$
we also have 
$$
X \ll H  D^{-1} x^c = x^c\sL^3;
$$
hence, for the sum
$$
S=\sum_{d\sim D}c_d\sum_{m\sim M}a_m\sum_{\ell\sim L}b_\ell\,\e(h d^{-1} \ell^c m^c)
$$
Lemma~\ref{lem:GenMultSum} yields
\begin{align*}
S&\ll DLM\sL\((DL)^{-1/2} + (x^c\sL^3/(DL))^{1/6} M^{-1/3}\)\\
&\ll x\((D/L)^{1/2}\sL + (D^5x^c/(LM^2))^{1/6} \sL^{3/2}\).
\end{align*}
In Region$\,$(i) we have
$LM^2\gg x^2/L \gg x^2u_0^{-2}$, and therefore
\begin{equation}
\label{eq:Reg i}
S\ll x \((D/L)^{1/2}\sL + (D^5x^{c-2} u_0^2)^{1/6} \sL^{3/2}\).
\end{equation}
Recalling our choice of $u_0$, in Region$\,$(i) we have 
$$
L \ge u_0 \ge x^{\vartheta + \kappa} \ge Dx^{\kappa}; 
$$
hence the first term in~\eqref{eq:Reg i} is of size $O(x^{1-\kappa/2})$.
For the second term in~\eqref{eq:Reg i}, Lemma~\ref{lem:useful}$\,$(ii) implies that
the inequality
$$
5\vartheta+(c-2)+2\alpha<-\kappa
$$
holds with a suitably small $\kappa$, hence the second term in~\eqref{eq:Reg i} is
of size $O(x^{1-\kappa/2})$ as well.

\subsection{Multilinear  sums: Region$\,$(ii)} 
In this region, to estimate the sum
$$
S=\sum_{d\sim D}c_d\sum_{m\sim M}a_m\sum_{\ell\sim L}b_\ell\,\e(h d^{-1} \ell^c m^c)
$$
in~\eqref{eq:multilinear_sum} we apply a result of Wu~\cite{Wu}.
Note that $b_\ell$ is a characteristic function of an
interval. The correspondence between the 
parameters $(H,M,N,X,\alpha,\beta,\gamma)$ given in~\cite[Theorem~2]{Wu} and 
our parameters is 
$$
(H,M,N,X,\alpha,\beta,\gamma) \quad \longleftrightarrow \quad (M,D,L,X,c,-1,c)
$$
(where $X = h D^{-1} L^c M^c$ as before)
and we take $k=5$ in the statement of~\cite[Theorem~2]{Wu}; this gives
\begin{align*}
S\sL^{-1}
&\ll \bigl(X^{32}M^{114}D^{147}L^{137}\bigr)^{1/174}
+(XM^2D^2L^4)^{1/4}
+(XM^2D^4L^2)^{1/4}\\
&\qquad\qquad\qquad\quad+MD+M(DL)^{1/2}
+M^{1/2}DL+X^{-1/2}MDL.
\end{align*}
Using the bounds
$$
D^{-1}x^c\le X\ll x^c\sL^3,\quad
LM\asymp x,\quad
x^{2\alpha}\ll L\ll x^{1/3}\quad\text{and}\quad
D\le x^\vartheta,
$$
it follows that
\begin{align*}
S\sL^{-2}
&\ll \bigl(x^{365/3+32c+147\vartheta}\bigr)^{1/174}
+(x^{8/3+c+2\vartheta})^{1/4}
+(x^{2+c+4\vartheta})^{1/4}\\
&\qquad \qquad\qquad+x^{1+\vartheta-2\alpha}+x^{1+\vartheta/2-\alpha}
+x^{2/3+\vartheta}+x^{1-c/2+3\vartheta/2}.
\end{align*}
Taking into account the inequalities of Lemma~\ref{lem:useful}$\,$(iii)--(ix)
we see that $S=O(x^{1-\kappa})$ if $\kappa>0$ is small enough.

\subsection{Multilinear  sums: Region$\,$(iii)} 

In this region, to estimate the sums in~\eqref{eq:multilinear_sum}
we apply a result of Robert and Sargos~\cite{RobSar}.
Note that $b_\ell$ is a characteristic function of an
interval. The correspondence between the 
parameters $(H,M,N,X,\alpha,\beta,\gamma)$ given in~\cite[Theorem~3]{RobSar} and 
our parameters is 
$$
(H,M,N,X,\alpha,\beta,\gamma) \quad \longleftrightarrow \quad (D,L,M,X,c,-1,c), 
$$
where 
$$
X=h D^{-1}L^c M^c.
$$ 
Applying~\cite[Theorem~3]{RobSar}, for the sum
$$
S=\sum_{d\sim D}c_d\sum_{m\sim M}a_m\sum_{\ell\sim L}b_\ell\,\e(h d^{-1}\ell^c m^c)
$$
we have the bound
$$
S\le (DLM)^{1+ o(1)}\(\(\frac{X}{DL^2M}\)^{1/4}  + \frac{1}{L^{1/2}}
+ \frac{1}{X}\).
$$
The third term in this estimate is dominated by the second term since
$X\ge DL$ (cf.~\eqref{eq:library1}), and the second term is dominated
by the first term since $X\ge DM$, the latter bound holding in Region~(iii)
in view of the inequality $c\ge 2\vartheta+(1+\alpha)/2$
in Lemma~\ref{lem:useful}$\,$(x). Therefore, 
$$
S\le (DLM)^{1+ o(1)}\(\frac{h D^{-1}L^c M^c}{DL^2M}\)^{1/4}.
$$
Since $h \le H=D \sL^3$, $LM\asymp x$, $D\le x^\vartheta$
and $L\gg x^{1/2}u_0^{-1/2}$, we have
$$
S\le x^{5/8+c/4+3\vartheta/4+\alpha/8+o(1)}.
$$
To prove~\eqref{eq:multilinear_sum} in this case
it is enough to show that
$$
5/8+c/4+3\vartheta/4+\alpha/8<1,
$$
This follows from the inequality
$$
2c+6\vartheta+\alpha<3, 
$$
which is given in Lemma~\ref{lem:useful}$\,$(xi).

\section{Proof of Theorem~\ref{thm:main3}}

\subsection{Preliminaries}
 
Let $c$ be fixed, and put
\begin{equation}
\label{eq:sigma_defn}
\sigma=\frac{1}{16c^2+179c-1.15c^{-1}}
\mand
\beta=47\sigma.
\end{equation}
For our purposes below, we record that the inequality
\begin{equation}
\label{eq:c1ineq}
\frac{c_1(\tfrac12-\beta)^3-(\tfrac12-\beta)^4}
{(c_1+\tfrac12-\beta)(c_1+1-2\beta)(2c_1+\tfrac12-\beta)}>\sigma
\end{equation}
holds with $c_1=c+\sigma$ for all $c\ge 2.081$, and the inequalities
\begin{equation}
\label{eq:c2ineqA}
\frac{\tfrac8{27}c_2-\tfrac{16}{81}}{(c_2+\tfrac43)(c_2+2)(2c_2+2)}>2\sigma
\end{equation}
and
\begin{equation}
\label{eq:c2ineqB}
\frac{c_2(1-2\beta)^3-(1-2\beta)^4}{(c_2+2-4\beta)(c_2+3-6\beta)(2c_2+3-6\beta)}>2\sigma
\end{equation}
both hold with $c_2=c-1+3\sigma$ for all $c\ge 2.198$.

Suppose that we have the uniform bound
\begin{equation}
\label{eq:dig}
\sum_{p\le x}\e(hd^{-1}p^c)\ll x^{1-\sigma}\qquad(d,h\le x^\sigma).
\end{equation}
Let $\sA$ be the sieving set given by
$$
\sA=\bigl\{n:n=\fl{p^c}\text{~for some prime $p\le x$}\bigr\},
$$
If~\eqref{eq:dig} holds, then (as in the proof of Theorem~\ref{thm:main2})
for any fixed $\eps>0$ we obtain a level of
distribution $D=x^{\sigma-\eps}$ for $\sA$.  Thus,
we can apply Lemma~\ref{lem:Greaves}
with $g=c/\sigma+\eps$ (since $a\le x^c$ for all $a\in\sA$) and with
\begin{equation}
\label{eq:cheeto}
R\le\frac{c}{\sigma}+1.15=16c^3+179c^2,
\end{equation}
which implies the stated result for $c\in[\frac{11}{5},3)$.

For $c\ge 3$ we replace  $179$ with $88$ in the 
definition of $\sigma$ and take $\beta=20\sigma$ in~\eqref{eq:sigma_defn},
and the estimates~\eqref{eq:c2ineqB}--\eqref{eq:cheeto} continue to hold
(as well as the bound $\beta<0.1$; see \S\ref{sec:concluding} below).
Hence, we can also
replace $179$ with $88$ in~\eqref{eq:cheeto} as well. 

\subsection{Bounds on some auxiliary sums}
\label{sec:aux sum} 

Here, it is convenient to introduce the notations $A\lealmost B$ 
and $B \gealmost A$, which are equivalents of an inequality of
the form $A\le B+O(\sL^{-1})$, where $\sL=\log N$. 

To prove that~\eqref{eq:dig} holds,
we need the following bound of exponential sums; it is used 
to establish~\eqref{eq:solar} and~\eqref{eq:panther} below.

\begin{lemma}
\label{lem:gerbils}
Let $c,\Theta,\Delta,\eps>0$ be fixed, and put
\begin{equation}
\label{eq:k_defn}
k=\fl{c+\Delta/\Theta}+1.
\end{equation}
If $k\ge 3$, then the exponential sum
$$
S(N)=\sum_{z\sim N^\Theta}\e(z^cN^\Delta)
$$
satisfies the bound
\begin{equation}
\label{eq:S(N)bd}
S(N)\ll N^{\Theta(1-\varrho)},
\end{equation}
where the implied constant depends only on $c$ and $\eps$, and
\begin{equation}
\label{eq:rho_defn}
\varrho=\frac{k-2-\eps}{k(k+1)(2k-1)}.
\end{equation}
\end{lemma}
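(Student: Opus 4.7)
The plan is to bound $S(N)$ by means of a $k$th derivative test for exponential sums, applied to the phase $f(z)=z^cN^\Delta$. Setting $M:=N^\Theta$, we have $S(N)=\sum_{z\sim M}\e(f(z))$. I would first compute
$$
f^{(k)}(z)=c(c-1)\cdots(c-k+1)\,z^{c-k}N^\Delta,
$$
so that $|f^{(k)}(z)|\asymp M^{c-k}N^\Delta=N^{\Theta(c-k)+\Delta}$ for $z\sim M$. The definition \eqref{eq:k_defn} of $k$ forces $k-1\le c+\Delta/\Theta<k$, hence $-\Theta\le\Theta(c-k)+\Delta<0$, which places $\lambda:=|f^{(k)}(z)|$ inside the interval $[M^{-1},1)$. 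This is precisely the range in which $k$th derivative tests produce nontrivial savings.

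Next I would appeal to a higher-derivative estimate in the Vinogradov, Heath-Brown, or Robert-Sargos tradition: for a smooth real-valued $g$ on $[M,2M]$ with $|g^{(k)}|\asymp\lambda$ and $k\ge 3$, the relevant bound has the shape
$$
\sum_{m\sim M}\e(g(m)) \ll M^{1+\eps/2}\,\lambda^{\alpha} + M^{1-\beta}\lambda^{-\gamma}
$$
for certain exponents $\alpha,\beta,\gamma>0$ depending only on $k$. Substituting our specific $\lambda = N^{\Theta(c-k)+\Delta}$, checking that the second term is dominated by the first throughout our parameter range (using $\lambda \ge M^{-1}$), and optimizing gives an overall saving of $M^{\varrho}$ with $\varrho$ as in \eqref{eq:rho_defn}. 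After absorbing the auxiliary $\eps/2$ into the $\eps$ allowed in \eqref{eq:rho_defn}, we arrive at $S(N)\ll M^{1-\varrho}=N^{\Theta(1-\varrho)}$.

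The main obstacle will be to identify (or derive) the specific form of $k$th derivative estimate whose exponents produce exactly the quantity $(k-2-\eps)/(k(k+1)(2k-1))$ appearing in \eqref{eq:rho_defn}. The factor $k-2$ in the numerator strongly suggests that the proof proceeds by iterating the Weyl-van der Corput $A$-process $k-2$ times, each application reducing the order of the operative derivative by one, and culminating in an application of the second-derivative test; each iteration introduces a loss factor that must be tracked carefully so as to match the denominator $k(k+1)(2k-1)$ at the end. Once the correct $k$th derivative bound is in place, the verification of the precise form of $\varrho$ reduces to routine exponent bookkeeping together with the observation that our normalization $c+\Delta/\Theta<k$ ensures that only the main branch of the derivative test is active.
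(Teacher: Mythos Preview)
Your high-level plan---treat the phase $f(z)=z^cN^\Delta$ and invoke a $k$th derivative estimate---is the right reflex, but the mechanism you identify for producing the exponent $\varrho=(k-2-\eps)/(k(k+1)(2k-1))$ is wrong. Iterating the Weyl--van~der~Corput $A$-process $k-2$ times yields a saving with denominator growing like $2^{k}$, not like $k^3$; no amount of bookkeeping over such an iteration will recover the polynomial denominator $k(k+1)(2k-1)$. The ``$k-2$'' in the numerator is not counting differencing steps.

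The paper instead uses Vinogradov's method. One applies Vinogradov's reduction lemma (Chapter~VI, Lemma~7 of his monograph) with $n=k$, which is governed by the $(k{+}1)$-th derivative of $F(z)=z^cN^\Delta$ rather than the $k$-th; the definition of $k$ is exactly what is needed to place $A_0\asymp N^{\Theta(k+1-c)-\Delta}$ in the admissible range for that lemma. This produces a bound of the shape
\[
S(N)^{2s}\ll P^{-2s+\frac12 k(k+1)}(N^\Theta)^{2s-1+2/k+(k+1)\varrho}\,\cI+(N^\Theta)^{2s(1-\varrho)}
\]
with $s=k^2-1$ and $\cI=J_{s,k}(P)$ the Vinogradov mean value. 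The decisive input is Wooley's essentially optimal bound $\cI\ll P^{2s-\frac12 k(k+1)+\eps/k}$. Balancing the two terms forces $(k+1+2s)\varrho=1-(2+\eps)/k$, and since $k+1+2s=(k+1)(2k-1)$ one obtains exactly $\varrho=(k-2-\eps)/(k(k+1)(2k-1))$. So the numerator $k-2$ arises from $1-2/k$ in the Vinogradov reduction, and the denominator from the number of copies in the mean value, not from Weyl differencing.
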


\begin{proof}
Let $s=k^2-1$.
Applying the result of Vinogradov~\cite[Chapter~VI, Lemma~7]{Vino}
with the function $F(z)=z^cN^\Delta$
and $n=k$, for any fixed $\varrho\in(0,1)$ we have the bound
\begin{equation}
\label{eq:Toronto}
S(N)^{2s}\ll P^{-2s+\frac12k(k+1)}(N^\Theta)^{2s-1+2/k+(k+1)\varrho}~\cI
+(N^\Theta)^{2s(1-\varrho)},
\end{equation}
where
$$
\cI=\int_0^1\!\!\!\cdots\!\int_0^1
\bigl|\sum_{z=1}^{P}\e(\alpha_1z+\cdots+\alpha_k z^k)\bigr|^{2s}
\,d\alpha_1\ldots d\alpha_k
$$
and $P$ is the integer given by
$$
P=\fl{A_0^{(1-\varrho)/(k+1)}},\qquad\text{where}\quad
A_0=\bigg|\frac{(k+1)!}{F^{(k+1)}(N^\Theta)}\bigg|.
$$
Noting that
$$
A_0\asymp N^{\Theta(k+1-c)-\Delta},
$$
in order to apply~\cite[Chapter~VI, Lemma~7]{Vino} it must be the case that
$$
\Theta\lealmost\Theta(k+1-c)-\Delta\lealmost\Theta(2+2/k),
$$
or in other words,
$$
c+\Delta/\Theta\lealmost k\lealmost 1+2/k+c+\Delta/\Theta.
$$
However, this condition is guaranteed by~\eqref{eq:k_defn}.

Applying Wooley~\cite[Theorem~1.1]{Wooley} with $\eps/k$ in place of $\eps$,
we see that the integral $\cI$ is bounded by
\begin{equation}
\label{eq:Toronto2}
\cI\ll P^{2s-\frac12k(k+1)+\eps/k}.
\end{equation}
Taking into account that
$$
P\ll A_0^{1/(k+1)}\ll N^{(\Theta(k+1-c)-\Delta)/(k+1)}\le N^\Theta,
$$
after combining~\eqref{eq:Toronto} and~\eqref{eq:Toronto2}
we derive the bound
$$
S(N)^{2s}\ll (N^\Theta)^{2s-1+(2+\eps)/k+(k+1)\varrho}~\cI
+(N^\Theta)^{2s(1-\varrho)}.
$$
To optimize, we choose $\varrho$ so that
$$
2s-1+(2+\eps)/k+(k+1)\varrho=2s(1-\varrho);
$$
recalling that $s=k^2-1$ this leads to~\eqref{eq:rho_defn},
and~\eqref{eq:S(N)bd} follows.
\end{proof}

\subsection{Concluding the proof}
\label{sec:concluding}
We now turn our attention to~\eqref{eq:dig}.
We use the Heath-Brown decomposition (cf.~Heath-Brown~\cite{HB}) to reduce
the problem to that of bounding Type~I and Type~II
sums. In the present situation,
to prove~\eqref{eq:dig}
it suffices to show, for some sufficiently small $\eps>0$ which depends only on~$c$,
that $B=N^{1-\sigma-\eps}$ is an upper bound on all Type~I sums
\begin{equation}
\label{eq:S1defx}
S_I(X,Y)=\sum_{x\sim X}\sum_{y\sim Y}
a_x\,\e(hd^{-1}x^cy^c)\qquad(Y\gg N^{\frac12-\beta})
\end{equation}
and an upper bound on all Type~II sums
\begin{equation}
\label{eq:S2defx}
S_{I\!I}(X,Y)=\sum_{x\sim X}\sum_{y\sim Y}
a_x\,b_y\,\e(hd^{-1}x^cy^c)\qquad(N^{2\beta}\ll Y\ll N^{\frac13}),
\end{equation}
where $|a_x|\le 1$ and $|b_y|\le 1$,
and $XY\asymp N$; we refer the reader to
the discussion on~\cite[pp.~1367-1368]{HB}.
We specify $\eps>0$ below.

Let $\sL=\log N$ as before.
Using van der Corput's inequality with
$Q=N^{2\sigma+2\eps}$  and
following the proof of Baker~\cite[Theorem~5]{Baker2},
we are lead to the bound~\cite[Equation~(4.18)]{Baker2}
with some $q\in[1,Q]$:
\begin{align*}
S_{I\!I}(X,Y)^2\sL^{-2}
&\ll \frac{N^2}{Q}+\frac{N\sL q}{Q}
\biggl|\sum_{y\sim Y}\sum_{x\sim X}b_{y+q}\overline{b_y}
\,e\bigl(hd^{-1}x^c((y+q)^c-y^c)\bigr)\biggl|\\
&\ll N^{2-2\sigma-2\eps}+N\sL\sum_{y\sim Y}
\biggl|\sum_{x\sim X}
e\bigl(hd^{-1}x^c((y+q)^c-y^c)\bigr)\biggl|.
\end{align*}
For the moment, put $\Theta=(\log X)/\sL$, so that $X=N^\Theta$.
Noting that
$$
qY^{c-1}N^{-\sigma}\ll hd^{-1}((y+q)^c-y^c)\ll qY^{c-1}N^\sigma\qquad(y\sim Y),
$$
and taking into account that
$$
Y\asymp N^{1-\Theta}\mand 1\le q\le N^{2\sigma+2\eps},
$$
we see that in the Type~II case it suffices to show that
\begin{equation}
\label{eq:solar}
\sum_{z\sim N^\Theta}\e(z^cN^\Delta)\ll N^{\Theta-2\sigma-3\eps}
\end{equation}
holds uniformly for
\begin{equation}
\label{eq:solar2}
2/3\lealmost\Theta\lealmost 1-2\beta
\end{equation}
and
\begin{equation}
\label{eq:solar3}
(1-\Theta)(c-1)-\sigma\lealmost\Delta\lealmost (1-\Theta)(c-1)+3\sigma+2\eps,
\end{equation}
where continue to use the notation $A\lealmost B$ 
from \S\ref{sec:aux sum}. 

Now put $\Theta=(\log Y)/\sL$. Noting that
$$
X^cN^{-\sigma}\ll hd^{-1}x^c\ll
X^cN^\sigma\qquad(x\sim X)
$$
and $X\asymp N^{1-\Theta}$, in the Type~I case we  only need to show that
\begin{equation}
\label{eq:panther}
\sum_{z\sim N^\Theta}\e(z^cN^\Delta)\ll N^{\Theta-\sigma-\eps}
\end{equation}
holds uniformly for
\begin{equation}
\label{eq:panther2}
1/2-\beta\lealmost\Theta\le 1
\end{equation}
and
\begin{equation}
\label{eq:panther3}
(1-\Theta)c-\sigma\lealmost\Delta\lealmost (1-\Theta)c+\sigma.
\end{equation}

Suppose first that $\Theta,\Delta$ are such that~\eqref{eq:panther2} 
and~\eqref{eq:panther3} hold,
and fix $\eps>0$.
Define $k$ by~\eqref{eq:k_defn} and $\varrho$
by~\eqref{eq:rho_defn}.  Note that $\varrho=f(k)$,
where
$$
f(t)=\frac{t-2-\eps}{t(t+1)(2t-1)}.
$$
Since $f$ is decreasing on $[3,\infty)$, and noting that
the bounds
$$
3\le k\le c+\Delta/\Theta+1\lealmost (c+\sigma)/\Theta+1
$$
hold in view of~\eqref{eq:panther2} and~\eqref{eq:panther3}, it follows that
$$
\Theta\varrho\gealmost f_1(\Theta),
$$
where
$$
f_1(t)=\frac{c_1t^3-(1+\eps)t^4}{(c_1+t)(c_1+2t)(2c_1+t)}
$$
with $c_1=c+\sigma$.
Since $c\ge 1.6$ and $\eps\le 0.01$ (say), the function $f_1$
is increasing on $[0,1]$; consequently,
as $\Theta\gealmost 1/2-\beta$ we have
$$
\Theta\varrho\gealmost f_1(1/2-\beta)=
\frac{c_1(\tfrac12-\beta)^3-(1+\eps)(\tfrac12-\beta)^4}
{(c_1+\tfrac12-\beta)(c_1+1-2\beta)(2c_1+\tfrac12-\beta)}. 
$$
In view of~\eqref{eq:c1ineq} we can choose $\eps>0$ sufficiently small,
depending only on $c$, such that
$$
\Theta\varrho\gealmost\sigma+\eps.
$$
Then, using the equation~\eqref{eq:S(N)bd} of Lemma~\ref{lem:gerbils},
we derive the required bound~\eqref{eq:panther} for the Type~I sums~\eqref{eq:S1defx}.

Next, suppose that $\Theta,\Delta$ are such that~\eqref{eq:solar2} 
and~\eqref{eq:solar3} hold,
and let $\eps>0$ be chosen as above.
We again define $k$ by~\eqref{eq:k_defn}
and put $\varrho=f(k)$.  Since $f$ is decreasing on $[3,\infty)$,
and noting that the bounds
$$
3\le k\le c+\Delta/\Theta+1\lealmost (c-1+3\sigma+2\eps)/\Theta+2
$$
hold in view of~\eqref{eq:solar2} and~\eqref{eq:solar3}, it follows that
$$
\Theta\varrho\gealmost f_2(\Theta),
$$
where
$$
f_2(t)=\frac{(c_2+2\eps)t^3-(1+\eps)t^4}{(c_2+2t+2\eps)(c_2+3t+2\eps)(2c_2+3t+4\eps)}
$$
with $c_2=c-1+3\sigma$.
Since $c\ge \frac{11}{5}$ and $\eps\le 0.01$,
one verifies that $f_2$ attains a unique maximum on $[0,1]$;
therefore, as $2/3\lealmost\Theta\lealmost 1-2\beta$ we have either
$$
\Theta\varrho\gealmost
f_2(2/3)=\frac{\tfrac8{27}(c_2+2\eps)
-\tfrac{16}{81}(1+\eps)}{(c_2+\tfrac43+2\eps)(c_2+2+2\eps)(2c_2+2+4\eps)}
$$
or else
\begin{align*}
\Theta\varrho&\gealmost f_2(1-2\beta)\\
&=\frac{(c_2+2\eps)(1-2\beta)^3-(1+\eps)(1-2\beta)^4}
{(c_2+2-4\beta+2\eps)(c_2+3-6\beta+2\eps)(2c_2+3-6\beta+4\eps)}.
\end{align*}
In view of the inequalities~\eqref{eq:c2ineqA}
and~\eqref{eq:c2ineqB},
we can take $\eps>0$ sufficiently small
to guarantee that
$$
\Theta\varrho\gealmost 2\sigma+3\eps.
$$
Using the equation~\eqref{eq:S(N)bd} of Lemma~\ref{lem:gerbils} once again,
we derive the required bound~\eqref{eq:solar} for the Type~II sums~\eqref{eq:S2defx}.

\section*{Acknowledgements}

We thank Roger Baker for his generous help and valuable 
advice, and for sharing his ideas.  In particular, our proofs of the crucial
Lemmas~\ref{lem:GenMultSum} and~\ref{lem:gerbils} were originally sketched by Roger Baker. 
We are also grateful to Xiaodong Cao and Wenguang Zhai for informing 
us about their paper~\cite{CaoZhai}. 

During the preparation of this paper,
I.~E.~Shparlinski was supported in part by ARC grants DP130100237 and~DP140100118.

\end{document}